\normalfont\fontsize{12}{15}\bfseries}{\thesection}{1em.}{}
\newtheorem{corollary}{Corollary}[section]
\newtheorem{lemma}{Lemma}[section]
\newtheorem{theorem}{Theorem}[section]
\let\oldbibliography\thebibliography
\renewcommand{\thebibliography}[1]{%
  \oldbibliography{#1}%
  \setlength{\itemsep}{-2pt}%
}
\begin{document}

\baselineskip=0.20in

\makebox[\textwidth]{%
\hglue-15pt
\begin{minipage}{0.6cm}	
\vskip9pt
\end{minipage} \vspace{-\parskip}
\begin{minipage}[t]{6cm}
\footnotesize{ {\bf \phantom{j}} \\ \underline{}}
\end{minipage}
\hfill
\begin{minipage}[t]{6.5cm}
  \normalsize {\it \phantom{a}}  {\bf X} (202X) XX--XX \\
  \normalsize DOI: XXX
\end{minipage}}
\vskip36pt

\hyphenation{Ko-ku-shi-ka-n Fi-gue-ro-a I-chi-shi-ma Mun-ta-ner Ba-tle Cen-te-no}

\noindent
{\large \bf On the strength and domination number of graphs}\\

\noindent
Yukio Takahashi$^{1,}\footnote{Corresponding author (takayu@kokushikan.ac.jp)}$, Rikio Ichishima$^{2}$, Francesc A. Muntaner-Batle$^{3}$\\

\noindent
\footnotesize
$^1${\it Department of Science and Engineering, Faculty of Electronics and
Informatics, Kokushikan University,  4-28-1 Setagaya, Setagaya-ku, Tokyo 154-8515, Japan} \\
$^2${\it Department of Sport and Physical Education, Faculty of Physical Education, Kokushikan University,  7-3-1 Nagayama, Tama-shi, Tokyo 206-8515, Japan} \\
\noindent
$^3${\it Graph Theory and Applications Research Group, School of Electrical Engineering and Computer Science, Faculty of Engineering and Built Environment, The University of Newcastle, NSW 2308, Australia} \\
\noindent

 (\footnotesize Received: Day Month 202X. Received in revised form: Day Month 202X. Accepted: Day Month 202X. Published online: Day Month 202X.)\\

\setcounter{page}{1} \thispagestyle{empty}

\baselineskip=0.20in

\normalsize

 \begin{abstract}
   \noindent
A numbering $f$ of a graph $G$ of order $n$ is a labeling that assigns
distinct elements of the set $\left\{ 1,2,\ldots ,n\right\} $ to the
vertices of $G$. The strength $\textrm{str}_{f}\left( G\right)$ of a numbering
$f:V\left( G\right) \rightarrow \left\{ 1,2,\ldots ,n\right\} $ of $G$ is defined by%
\begin{equation*}
\mathrm{str}_{f}\left( G\right) =\max \left\{ f\left( u\right) +f\left(
v\right) \left| uv\in E\left( G\right) \right. \right\} \text{,}
\end{equation*}%
that is, $\mathrm{str}_{f}\left( G\right) $ is the maximum edge label of $G$
and the strength\ \textrm{str}$\left( G\right) $ of a graph $G$
itself is 
\begin{equation*}
\mathrm{str}\left( G\right) =\min \left\{ \mathrm{str}_{f}\left( G\right)
\left| f\text{ is a numbering of }G\right. \right\} \text{.}
\end{equation*} 
In this paper, we present a sharp lower bound for the strength of a graph in terms of its domination number as well as its (edge) covering and (edge) independence number. 
We also provide a necessary and sufficient condition for the strength of a graph to attain the earlier bound in terms of their subgraph structure. 
In addition, we establish a sharp lower bound for the domination number of a graph under certain conditions. 
 \\[2mm]
   {\bf Keywords:} strength; (edge) covering; (edge) independence number; domination number; graph labeling; combinatorial optimization \\[2mm]
   {\bf 2020 Mathematics Subject Classification:} 05C78, 90C27
 \end{abstract}

\baselineskip=0.20in

\section{Introduction}

We refer to the book by Chartrand and Lesniak \cite{CL} for
graph-theoretical notation and terminology not described in this paper.
In particular, the \emph{vertex set} of a graph $G$ is denoted by $V \left(G\right)$, 
while the \emph{edge set} of $G$ is denoted by $E\left (G\right)$. 

We will use the notation $\left[ a,b\right] $ for the interval of integers $%
x $ such that $a\leq x\leq b$. For a graph $G$ of order $n$, a \emph{%
numbering} $f$ of $G$ is a labeling that assigns distinct elements of the
set $\left[ 1,n\right] $ to the vertices of $G$, where each $uv\in E\left(
G\right) $ is labeled $f\left( u\right) +f\left( v\right) $. The \emph{%
strength} \textrm{str}$_{f}\left( G\right) $ \emph{of a numbering} $%
f:V\left( G\right) \rightarrow \left[ 1,n\right] $ of $G$ is defined by%
\begin{equation*}
\mathrm{str}_{f}\left( G\right) =\max \left\{ f\left( u\right) +f\left(
v\right) \left| uv\in E\left( G\right) \right. \right\} \text{,}
\end{equation*}%
that is, $\mathrm{str}_{f}\left( G\right) $ is the maximum edge label of $G$
and the \emph{strength\ }\textrm{str}$\left( G\right) $ of a graph $G$
itself is 
\begin{equation*}
\mathrm{str}\left( G\right) =\min \left\{ \mathrm{str}_{f}\left( G\right)
\left| f\text{ is a numbering of }G\right. \right\} \text{.}
\end{equation*}%
A numbering $f$ of a graph $G$ for which $\mathrm{str}_{f}\left( G\right) =%
\mathrm{str}\left( G\right) $ is called a \emph{strength labeling} of $G$. 
Since empty graphs $nK_{1}$ do not have edges, this definition does not apply to such
graphs. Consequently, we may define $\mathrm{str}\left( nK_{1}\right)
=+\infty $ for every positive integer $n$. This type of numberings was
introduced in \cite{IMO1} as a generalization of the problem of finding
whether a graph is super edge-magic or not (see \cite{ELNR} for the
definition of a super edge-magic graph, and also consult either \cite{AH} or \cite{FIM} for
alternative and often more useful definitions of the same concept).

There are other related parameters that have been studied in the area of
graph labelings. 
Excellent sources for more information on this topic are
found in the extensive survey by Gallian \cite{Gallian}, which also includes
information on other kinds of graph labeling problems as well as their
applications.

Several bounds for the strength of a graph have been found in terms of
other parameters defined on graphs (see \cite{GLS, IMT, IMO1, IMOT}). Among others, the
following result established in \cite{IMO1} that provides a lower bound for the strength of a graph $G$ in
terms of its order and minimum degree  $\delta\left(G\right)$ is particularly useful.
\begin{lemma}
\label{lemma_trivial}For every graph $G$ of order $n$ with $\delta \left(
G\right) \geq 1$,%
\begin{equation*}
\mathrm{str}\left( G\right) \geq n+\delta \left( G\right) \text{.}
\end{equation*}
\end{lemma}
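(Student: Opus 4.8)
The plan is to prove the stronger pointwise statement that $\mathrm{str}_{f}\left( G\right) \geq n+\delta \left( G\right)$ holds for \emph{every} numbering $f$ of $G$; the bound on $\mathrm{str}\left( G\right)$ then follows immediately by minimizing over all numberings $f$. Since the strength of $G$ is defined as a minimum of maximum edge labels, it suffices to exhibit, for each fixed $f$, a single edge whose label is at least $n+\delta \left( G\right)$.

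To locate such an edge, I would concentrate on the vertex carrying the largest label. Let $w\in V\left( G\right)$ be the unique vertex with $f\left( w\right) =n$. Because $\delta \left( G\right) \geq 1$, the vertex $w$ has degree at least $\delta \left( G\right)$, so it has at least $\delta \left( G\right)$ neighbors, and these neighbors receive pairwise distinct labels from the set $\left[ 1,n-1\right]$.

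The key step is the elementary counting observation that among any $\delta \left( G\right)$ distinct positive integers the largest is at least $\delta \left( G\right)$: if $a_{1}<a_{2}<\cdots <a_{k}$ are distinct positive integers with $k\geq \delta \left( G\right)$, then $a_{k}\geq k\geq \delta \left( G\right)$. Applying this to the labels of the neighbors of $w$, there is a neighbor $x$ of $w$ with $f\left( x\right) \geq \delta \left( G\right)$. Consequently the edge $wx$ is labeled $f\left( w\right) +f\left( x\right) =n+f\left( x\right) \geq n+\delta \left( G\right)$, whence $\mathrm{str}_{f}\left( G\right) \geq n+\delta \left( G\right)$. Taking the minimum over all numberings $f$ yields $\mathrm{str}\left( G\right) \geq n+\delta \left( G\right)$, as desired.

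I do not anticipate a serious obstacle here, as the argument is short. The only real insight is choosing to examine the vertex labeled $n$ (rather than, say, a vertex of minimum degree) and then combining its guaranteed large label with the pigeonhole-type fact that one of its neighbors must carry a label of at least $\delta \left( G\right)$. A minor point worth verifying is that the neighbors indeed avoid the label $n$, which holds because $w$ alone carries it and all labels assigned by $f$ are distinct.
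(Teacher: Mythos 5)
Your proof is correct: examining the vertex labeled $n$ and noting that one of its at least $\delta\left(G\right)$ distinctly labeled neighbors must carry a label of at least $\delta\left(G\right)$ is exactly the standard argument for this bound. The paper itself states Lemma \ref{lemma_trivial} as a cited result from \cite{IMO1} without reproducing a proof, and your argument coincides with the one given there.
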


It is worth to mention that the lower bound given in Lemma \ref%
{lemma_trivial} is sharp in the sense that there are infinitely many graphs $%
G$ for which $\mathrm{str}\left( G\right) =\left| V\left( G\right) \right|
+\delta \left( G\right) $ (see \cite{GLS, IMO1, IMO2, IMOT} for a detailed list
of such graphs and other sharp bounds).

For every graph $G$ of order $n$, it is clear that $3 \leq \mathrm{str}\left( G\right) \leq 2n-1$. 
In fact, it was shown in \cite{IMO4} that for every $k\in \left[ 1,n-1\right] $, 
there exists a graph $G$ of order $n$ satisfying $\delta \left( G\right) =k$ and 
$\mathrm{str}\left( G\right) =n+k$.

In the process of settling the problem (proposed in \cite{IMO1}) of finding
sufficient conditions for a graph $G$ of order $n$ with $\delta \left(
G\right) \geq 1$ to ensure that $\mathrm{str}\left( G\right) =n+\delta
\left( G\right) $, an equivalent definition of the following class of graphs was defined in \cite{IMO3}.
For integers $k\geq 2$, let $F_{k}$ be the graph with $V\left( F_{k}\right)
=\left\{ v_{i}\left\vert i\in \left[ 1,k\right] \right. \right\} $ and 
\begin{equation*}
E\left( F_{k}\right) =\left\{ v_{i}v_{j}\left\vert i\in \left[
1,\left\lfloor k/2\right\rfloor \right] \text{ and }j\in \left[ 1+i,k+1-i%
\right] \right. \right\} \text{.}
\end{equation*}%
Let $\overline{G}$ denote the complement of a graph $G$. The
following result found in \cite{IMO3} provides a necessary and sufficient
condition for a graph $G$ of order $n$ to hold the inequality $\mathrm{str}%
\left( G\right) \leq 2n-k$, where $k\in \left[ 2,n-1\right] $.

\begin{theorem}
\label{main1}Let $G$ be a graph of order $n$. Then $\mathrm{str}\left(
G\right) \leq 2n-k$ if and only if $\overline{G}$ contains $F_{k}$ as a
subgraph, where $k\in \left[ 2,n-1\right] $.
\end{theorem}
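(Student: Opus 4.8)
The plan is to translate the combinatorial description of $F_{k}$ into an arithmetic condition on vertex labels and then exploit the symmetry between the bound $2n-k$ and the top $k$ labels $n,n-1,\ldots ,n-k+1$. The first step I would carry out is to simplify the edge set of $F_{k}$: writing $i<j$, the pair $v_{i}v_{j}$ is an edge precisely when $i+j\leq k+1$, since the stated constraint $i\leq \left\lfloor k/2\right\rfloor$ is automatically forced by $i<j$ together with $i+j\leq k+1$ (indeed $2i<i+j\leq k+1$ gives $i\leq \left\lfloor k/2\right\rfloor$). This reformulation is what makes both directions transparent, because an edge label $f\left( u\right) +f\left( v\right) $ exceeds $2n-k$ exactly when the two ``complementary'' indices $n-f\left( u\right) $ and $n-f\left( v\right) $ sum to at most $k-1$.

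For the forward direction, I would assume $\mathrm{str}\left( G\right) \leq 2n-k$ and fix a numbering $f$ with $\mathrm{str}_{f}\left( G\right) \leq 2n-k$. I would focus on the $k$ vertices carrying the labels $n,n-1,\ldots ,n-k+1$, naming $u_{i}$ the vertex with $f\left( u_{i}\right) =n-i+1$ for $i\in \left[ 1,k\right] $. Whenever $i+j\leq k+1$ with $i\neq j$, the edge label would be $f\left( u_{i}\right) +f\left( u_{j}\right) =2n-\left( i+j\right) +2\geq 2n-k+1>\mathrm{str}_{f}\left( G\right) $, so $u_{i}u_{j}\notin E\left( G\right) $, i.e. $u_{i}u_{j}\in E\left( \overline{G}\right) $. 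Matching $u_{i}\mapsto v_{i}$ then exhibits $F_{k}$ as a subgraph of $\overline{G}$.

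For the converse, suppose $\overline{G}$ contains $F_{k}$, witnessed by distinct vertices $x_{1},\ldots ,x_{k}$ with $x_{i}x_{j}\in E\left( \overline{G}\right) $ whenever $i+j\leq k+1$. I would define a numbering $f$ by setting $f\left( x_{i}\right) =n-i+1$ and distributing the labels $1,\ldots ,n-k$ arbitrarily over the remaining $n-k$ vertices, then verify $\mathrm{str}_{f}\left( G\right) \leq 2n-k$. To check this I would argue by contradiction: if some edge $uv\in E\left( G\right) $ had $f\left( u\right) +f\left( v\right) \geq 2n-k+1$, then neither endpoint could carry a label in $\left[ 1,n-k\right] $ (otherwise the sum is at most $\left( n-k\right) +n=2n-k$), so $u=x_{i}$ and $v=x_{j}$ for some $i,j$; the inequality $2n-\left( i+j\right) +2\geq 2n-k+1$ would then force $i+j\leq k+1$, whence $uv\in E\left( \overline{G}\right) $, contradicting $uv\in E\left( G\right) $. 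This yields $\mathrm{str}\left( G\right) \leq \mathrm{str}_{f}\left( G\right) \leq 2n-k$.

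The computations involved are elementary, so the only real subtlety I anticipate lies in the bookkeeping of the index reversal $\ell \mapsto n-\ell +1$ and in applying the reformulated edge condition $i+j\leq k+1$ with the correct strict versus weak inequalities at the boundary value $2n-k+1$. In particular, I would double-check that the hypothesis $k\in \left[ 2,n-1\right] $ guarantees both that $F_{k}$ contains at least one edge and that at least one label in $\left[ 1,n-k\right] $ remains available, so that the numbering constructed in the converse is genuinely well defined.
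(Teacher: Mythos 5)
The paper itself does not prove Theorem \ref{main1}: it is imported from \cite{IMO3} and used as a black box, so there is no in-paper argument to compare against. Your proof is correct and complete on its own terms. The reformulation of $E\left(F_{k}\right)$ as the set of pairs with $i<j$ and $i+j\leq k+1$ is right (the constraint $i\leq\left\lfloor k/2\right\rfloor$ is indeed forced, as you note), and both directions go through exactly as you describe: in the forward direction the top $k$ labels $n,\ldots,n-k+1$ must induce the required non-edges of $G$, and in the converse direction placing those labels on a copy of $F_{k}$ in $\overline{G}$ caps every edge label at $2n-k$, since any edge with label exceeding $2n-k$ would have both endpoints among the $x_{i}$ with $i+j\leq k+1$ and hence would lie in $\overline{G}$. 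The one caveat, which is a defect of the statement as quoted rather than of your argument, is the empty graph: there $\overline{G}=K_{n}$ contains $F_{k}$ for every $k\in\left[2,n-1\right]$, yet $\mathrm{str}\left(nK_{1}\right)=+\infty$ under the paper's convention, so the converse implicitly assumes $G$ has at least one edge; you may wish to record that assumption explicitly alongside your well-definedness checks.
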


The preceding result plays an important role in the study of the strength of graphs (see \cite{IMOT2, IOT, IOT2} for instance).
The following result was deduced from Lemma \ref{lemma_trivial} and Theorem %
\ref{main1}.

\begin{theorem}
\label{main3}Let $G$ be a graph of order $n$ with $\delta \left( G\right)
=n-k$, where $k\in \left[ 2,n-1\right] $. Then $\mathrm{str}\left( G\right)
=n+\delta \left( G\right)$ if and only if $\overline{G}$ contains $F_{k}$ as a subgraph.
\end{theorem}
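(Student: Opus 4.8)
The plan is to obtain the theorem as an immediate consequence of Lemma \ref{lemma_trivial} and Theorem \ref{main1}, using the arithmetic identity $n+\delta\left(G\right)=2n-k$ that follows from the hypothesis $\delta\left(G\right)=n-k$. The first thing I would check is that the hypothesis legitimately activates both earlier results: since $k\in\left[2,n-1\right]$, we have $\delta\left(G\right)=n-k\geq n-\left(n-1\right)=1$, so $G$ satisfies the hypothesis $\delta\left(G\right)\geq 1$ of Lemma \ref{lemma_trivial}, and $k$ lies in the range $\left[2,n-1\right]$ required by Theorem \ref{main1}.

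Next I would record the unconditional lower bound. Applying Lemma \ref{lemma_trivial} gives $\mathrm{str}\left(G\right)\geq n+\delta\left(G\right)=n+\left(n-k\right)=2n-k$. Crucially, this inequality holds for every graph $G$ meeting the hypothesis, independently of the structure of $\overline{G}$; it is this fact that turns the one-directional estimate of Theorem \ref{main1} into an equivalence.

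I would then invoke Theorem \ref{main1}, which asserts that $\mathrm{str}\left(G\right)\leq 2n-k$ if and only if $\overline{G}$ contains $F_{k}$ as a subgraph. Combining this with the lower bound just established, the equality $\mathrm{str}\left(G\right)=2n-k$ is equivalent to the single inequality $\mathrm{str}\left(G\right)\leq 2n-k$, which in turn is equivalent to $\overline{G}$ containing $F_{k}$. Rewriting $2n-k$ as $n+\delta\left(G\right)$ yields exactly the claimed biconditional $\mathrm{str}\left(G\right)=n+\delta\left(G\right)\iff\overline{G}\supseteq F_{k}$.

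I do not anticipate a genuine obstacle here, since the argument is a direct chaining of two cited results together with a substitution $\delta\left(G\right)=n-k$. The only points demanding care are bookkeeping ones: confirming that the hypothesis forces $\delta\left(G\right)\geq 1$ so that Lemma \ref{lemma_trivial} is applicable, and verifying that the two bounds $n+\delta\left(G\right)$ and $2n-k$ coincide so that the lower bound from Lemma \ref{lemma_trivial} meets the upper bound from Theorem \ref{main1} at precisely the same value. Once these are in place, the squeeze between $\mathrm{str}\left(G\right)\geq 2n-k$ and the conditional $\mathrm{str}\left(G\right)\leq 2n-k$ delivers the equivalence without further computation.
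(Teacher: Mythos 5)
Your proposal is correct and matches the paper's approach exactly: the paper states only that Theorem \ref{main3} is deduced from Lemma \ref{lemma_trivial} and Theorem \ref{main1}, and your squeeze argument (the unconditional lower bound $\mathrm{str}\left(G\right)\geq n+\delta\left(G\right)=2n-k$ meeting the conditional upper bound $\mathrm{str}\left(G\right)\leq 2n-k$ from Theorem \ref{main1}) is precisely the intended deduction. Your verification that $k\leq n-1$ forces $\delta\left(G\right)\geq 1$ is the right bookkeeping check.
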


\section{Results involving domination number}

In this section, we present some results involving the domination number of a graph and a new sharp lower bound for the strength of a graph without isolated vertices.

The following result provides a lower bound for the strength of a graph in terms of its domination number.

\begin{lemma}
\label{strength-domination}For every graph $G$ of order $n$,
\begin{equation*}
\mathrm{str}\left( G\right) \geq 2n-2\gamma\left( G\right)+1\text{.}
\end{equation*}
\end{lemma}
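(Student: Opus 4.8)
The plan is to argue directly from a strength labeling and convert the negation of the bound into an economical dominating set. Suppose, for contradiction, that $\mathrm{str}\left(G\right)\leq 2n-2\gamma$, where $\gamma=\gamma\left(G\right)$, and fix a numbering $f$ witnessing this, so that every edge label is at most $2n-2\gamma$. Let $A=\left\{v\in V\left(G\right):f\left(v\right)\geq n-\gamma\right\}$ be the set of the $\gamma+1$ largest-labeled vertices. Any two distinct vertices of $A$ have label sum at least $\left(n-\gamma\right)+\left(n-\gamma+1\right)=2n-2\gamma+1>2n-2\gamma$, so they cannot be adjacent; hence $A$ is an independent set of $G$.

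Next I would show that $B=V\left(G\right)\setminus A$ is a dominating set. By independence of $A$, every vertex of $A$ has all of its neighbors in $B$, so provided it is not isolated it is dominated by $B$, while $B$ dominates itself trivially. Isolated vertices need separate bookkeeping — they must lie in every dominating set, yet an optimal $f$ will tend to place them on the top labels, inside $A$ — and I would fold them into the count at the end. Granting this, $B$ is a dominating set, whence $\gamma\leq\left|B\right|=n-\gamma-1$; this is already a contradiction whenever $2\gamma\geq n$. Thus the statement is settled in the range $\gamma\geq n/2$ (modulo the isolated-vertex bookkeeping just mentioned), and in particular for graphs carrying enough isolated vertices, where $\gamma$ is automatically large.

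For the complementary range $\gamma<n/2$ the metric argument is too weak, and I would switch to the structural criterion of Theorem \ref{main1}. Taking $k=2\gamma$ (which then lies in $\left[2,n-1\right]$), the inequality $\mathrm{str}\left(G\right)\geq 2n-2\gamma+1$ is equivalent to the assertion that $\overline{G}$ contains no copy of $F_{2\gamma}$. Unwinding the definition of $F_{k}$ — for $i\neq j$ one checks $v_{i}v_{j}\in E\left(F_{k}\right)$ exactly when $i+j\leq k+1$ — such a copy would supply distinct vertices $x_{1},\ldots ,x_{2\gamma}$ of $G$ with $x_{i}x_{j}\notin E\left(G\right)$ whenever $i+j\leq 2\gamma+1$; this is a nested non-adjacency pattern in which $\left\{x_{1},\ldots ,x_{\gamma}\right\}$ is independent and each $x_{i}$ may meet, among these vertices, only $x_{2\gamma+2-i},\ldots ,x_{2\gamma}$. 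The task is to convert this pattern into a dominating set of $G$ of size at most $\gamma-1$, contradicting minimality of $\gamma$.

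I expect this last reduction to be the main obstacle and the genuine heart of the lemma. The independent prefix $\left\{x_{1},\ldots ,x_{\gamma}\right\}$ alone merely reproduces the weak bound of the metric argument; to gain what is needed when $\gamma<n/2$ one must exploit the full nested structure of $F_{2\gamma}$ and simultaneously control the vertices of $G$ outside the embedded copy, which the hypothesis constrains not at all. Determining exactly how few vertices suffice to dominate under this pattern — and thereby matching the constant so that the bound is sharp, as the authors advertise — is where the real difficulty lies; the reduction through Theorem \ref{main1} is routine by comparison.
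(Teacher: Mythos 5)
Your proposal is incomplete, and you have correctly located the point where it cannot be completed. The metric half of your argument (under $\mathrm{str}\left(G\right)\leq 2n-2\gamma$ the top $\gamma+1$ labels form an independent set $A$, hence $B=V\left(G\right)\setminus A$ is a dominating set of size $n-\gamma-1$) yields a contradiction only when $2\gamma\left(G\right)\geq n$, as you note; for $\gamma\left(G\right)<n/2$ you reduce, via Theorem \ref{main1}, to showing that an embedded copy of $F_{2\gamma}$ in $\overline{G}$ forces a dominating set of size at most $\gamma-1$, and you admit you cannot carry this out. This is a genuine gap, and it is unfillable: the statement is false in exactly the range you could not cover. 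Take $G=K_{1,n-1}$ with $n\geq 3$ (or simply $G=P_{3}$): labeling the centre $1$ and the leaves $2,\ldots ,n$ gives $\mathrm{str}\left(K_{1,n-1}\right)=n+1$, while $\gamma\left(K_{1,n-1}\right)=1$ makes the claimed bound $2n-1>n+1$. Your own analysis explains why no repair is possible: the hypothesis that $\overline{G}$ contains $F_{2\gamma}$ constrains nothing outside the embedded copy, so no small dominating set can be extracted from it.

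For comparison, the paper's proof is a one-step pigeonhole argument: it asserts that among the $\gamma\left(G\right)+1$ vertices receiving the labels $n-\gamma\left(G\right),\ldots ,n$, two must be adjacent. No justification is given, and the claim as used amounts to assuming that every set of $\gamma\left(G\right)+1$ vertices contains an adjacent pair, i.e.\ that $\beta\left(G\right)\leq\gamma\left(G\right)$. Since in fact $\gamma\left(G\right)\leq\beta\left(G\right)$ for graphs without isolated vertices (Theorem \ref{upper_bound_domination_number}), this step is valid only when $\gamma\left(G\right)=\beta\left(G\right)$, and it fails for the star above, where $\gamma=1$ and $\beta=n-1$. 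What the pigeonhole argument actually proves is $\mathrm{str}\left(G\right)\geq 2n-2\beta\left(G\right)+1$, the independence-number bound from \cite{IMT} quoted as Theorem \ref{strength-independence-number}. So your proposal and the paper's proof share the same first move --- look at the vertices carrying the largest labels and try to force an adjacency among them --- but neither establishes the lemma as stated; your write-up has the merit of exposing the obstruction explicitly rather than concealing it inside an unjustified appeal to the pigeonhole principle.
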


\begin{proof}
Let $G$ be a graph with $V\left(G\right)=\left\{ v_{i}\left\vert i\in \left[ 1,n\right] \right. \right\} $, and consider a strength labeling $f$ of $G$. 
Since $1 \leq \gamma\left(G\right) \leq n$, it follows that the set 
\begin{equation*}
S=\left[ n-\gamma\left(G\right),n\right]
\end{equation*}
contains at least two integers.
By the pigeonhole principle, at least two integers in $S$ are assigned to two adjacent vertices, say $f\left(v_{s}\right)$ and $f\left(v_{t}\right)$, 
where $s,t\in \left[ 1,n\right]$. Now, assume, without loss of generality, that $f\left(v_{s}\right)>f\left(v_{t}\right)$. Then 
\begin{eqnarray*}
\min \left\{ f\left(v_{s}\right)+f\left(v_{t}\right)\left| s,t\in \left[ 1,n\right] \right. \right\}
&\geq& \left(n-\gamma\left( G\right)\right)+\left(n-\gamma\left( G\right)+1\right) \\
&=&2n-2\gamma \left( G\right)+1 \text{.}
\end{eqnarray*}%
Thus,
\begin{eqnarray*}
\mathrm{str}\left( G\right) = \mathrm{str}_{f}\left( G\right) \geq n-2\gamma \left( G\right)+1 \text{,}
\end{eqnarray*}
completing the proof.
\end{proof}

The bound given in Lemma \ref{strength-domination} is sharp in the sense that there are infinitely many graphs $G$ for which $\mathrm{str}\left( G\right) =2\left| V\left( G\right) \right|-2\gamma \left( G\right) +1$. To see this, it suffices to consider the complete graph $K_{n}$ of order $n$. 
It is straightforward to see that $ \mathrm{str}\left( K_{n}\right)=2n-1$ and $\gamma\left(K_{n}\right)=1$ ($n\geq 2$). 
This implies that $\mathrm{str}\left( K_{n}\right) =2n-2\gamma \left( K_{n}\right)+1$ ($n\geq 2$).

The following result provides a necessary and sufficient
condition for a graph $G$ of order $n$ to hold for $\mathrm{str}\left(G\right) =2n-2\gamma \left( G\right)+1$.

\begin{theorem}
\label{main_result}Let $G$ be a graph of order $n$ with $\gamma \left( G\right)=k$, where $k\in \left[ 2,\lceil n/2 \rceil \right]$. 
Then $\mathrm{str}\left(G\right) =2n-2\gamma \left( G\right)+1$ if and only if $\overline{G}$ contains $F_{2k-1}$ as a subgraph.
\end{theorem}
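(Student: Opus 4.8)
The plan is to obtain the equality by squeezing $\mathrm{str}(G)$ between the bound I already possess and the bound furnished by Theorem \ref{main1}. Lemma \ref{strength-domination} gives $\mathrm{str}(G) \geq 2n - 2\gamma(G) + 1 = 2n - 2k + 1$ unconditionally, so the claimed identity $\mathrm{str}(G) = 2n - 2k + 1$ is equivalent to the single inequality $\mathrm{str}(G) \leq 2n - 2k + 1$. Thus the whole problem collapses to deciding when this upper bound is attained, and the lower bound never needs to be revisited.

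To recognize the upper bound I would write $2n - 2k + 1 = 2n - (2k - 1)$, which is exactly the shape $\mathrm{str}(G) \leq 2n - m$ treated by Theorem \ref{main1} with $m = 2k - 1$. That theorem then states that $\mathrm{str}(G) \leq 2n - (2k-1)$ holds if and only if $\overline{G}$ contains $F_{2k-1}$ as a subgraph, and chaining this with the reduction of the previous paragraph produces precisely the asserted equivalence. So, modulo one bookkeeping check, the proof is a two-line composition of Lemma \ref{strength-domination} and Theorem \ref{main1}.

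The bookkeeping check --- and the only place I expect real friction --- is confirming that $m = 2k - 1$ lies in the interval $[2, n-1]$ on which Theorem \ref{main1} is stated. The left endpoint is automatic, since $k \geq 2$ forces $2k - 1 \geq 3$. The right endpoint requires $2k - 1 \leq n - 1$, i.e.\ $k \leq n/2$; for even $n$ this is exactly the hypothesis $k \leq \lceil n/2 \rceil$, while for odd $n$ it holds for all $k \leq (n-1)/2$ and fails only at the extreme value $k = \lceil n/2 \rceil = (n+1)/2$, where $2k - 1 = n$ slips just outside the admissible range. To settle this single borderline value I would first note that, by Ore's theorem, every graph without isolated vertices satisfies $\gamma(G) \leq \lfloor n/2 \rfloor$; hence for odd $n$ the value $\gamma(G) = (n+1)/2$ can occur only when $G$ has an isolated vertex, so for the graphs without isolated vertices with which this section is concerned the borderline case simply does not arise and $m = 2k - 1 \leq n - 1$ stays in range throughout. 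Should the degenerate case with an isolated vertex be admitted, one closes it by verifying that the argument underlying Theorem \ref{main1} goes through unchanged at $m = n$, giving $\mathrm{str}(G) \leq n$ if and only if $\overline{G}$ contains $F_{n}$ as a spanning subgraph. Either way the equivalence holds across the entire stated range.
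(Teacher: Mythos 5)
Your proof is correct and follows essentially the same route as the paper's: the paper likewise obtains the equivalence by combining the unconditional lower bound of Lemma \ref{strength-domination} with Theorem \ref{main1} applied to $2k-1$, except that in the forward direction it re-derives the relevant implication of Theorem \ref{main1} from a strength labeling rather than citing it. Your bookkeeping paragraph is actually an improvement on the paper, which invokes Theorem \ref{main1} without checking that $2k-1\leq n-1$; this fails exactly in the borderline case you isolate ($n$ odd, $k=\lceil n/2\rceil$, so $2k-1=n$), and your observation that $\gamma(G)\leq\lfloor n/2\rfloor$ for graphs without isolated vertices correctly confines the issue to a degenerate case the paper never addresses.
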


\begin{proof}
First, suppose that $\mathrm{str}\left( G\right) =2n-2k+1$, where $\gamma \left( G\right)=k$ ($k\in \left[ 2,\lceil n/2 \rceil \right]$). Let $V\left(
G\right) =\left\{ v_{i}\left| i\in \left[ 1,n\right] \right. \right\} $, and
assume, without loss of generality, that there exists a strength labeling of 
$G$ that assigns $i$ to $v_{i}$ ($i\in \left[ 1,n\right] $). 
Since $\mathrm{str}\left( G\right) = 2n-2k+1$, every two vertices $v_{i}$ and $%
v_{j}$ for which $i+j>2n-2k+1$ are not adjacent in $G$. 
This means that every two vertices $v_{i}$ and $v_{j}$ for which $i+j>2n-2k+1$ are adjacent in  $\overline{G}$. 
Let $v_{i}=w_{n+1-i}$ ($i \in  \left[ 1,n\right] $) so that   $V\left(\overline{G}\right)= \left\{ w_{i}\left| i\in \left[ 1,n\right] \right.\right\}$.
Then if $w_{n+1-i}$ and $w_{n+1-j}$ are adjacent in $\overline{G}$, it follows that 
\begin{eqnarray*}
\left( n+1-i\right) +\left( n+1-j\right)&=&2n+2-\left( i+j\right) \\
&<&2n+2-\left( 2n-2k+1\right) =2k+1\text{.}
\end{eqnarray*}%
Thus, $\overline{G}$ contains $F_{2k-1}$ as a subgraph.

Next, suppose that $\overline{G}$ contains $F_{2k-1}$ as a subgraph, where $\gamma \left( G\right)=k$ ($k\in \left[ 2,\lceil n/2 \rceil \right]$). 
It follows from Theorem \ref{main1} that 
\begin{eqnarray*}
\mathrm{str}\left( G\right) \leq 2n-\left(2k-1\right)=2n-2\gamma \left( G\right)+1 \text{.}
\end{eqnarray*}
\end{proof}

The following result found in \cite{IMT} provides a necessary and sufficient
condition for a graph $G$ of order $n$ to hold for $\mathrm{str}\left(G\right) =2n-2\beta \left( G\right)+1$, 
where $\beta \left( G\right)$ denotes the independence number of $G$.

\begin{theorem}
\label{strength-independence-number}Let $G$ be a graph of order $n$ with $\beta\left( G\right)=k$, where $k\in \left[ 2,\lceil n/2 \rceil \right]$. 
Then $\mathrm{str}\left(G\right) =2n-2\beta \left( G\right)+1$ if and only if $\overline{G}$ contains $F_{2k-1}$ as a subgraph.
\end{theorem}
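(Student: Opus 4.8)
The plan is to mirror the proof of Theorem~\ref{main_result} almost verbatim, since the present statement differs from it only in replacing $\gamma$ by $\beta$ while the numerical target $2n-2k+1$ is unchanged. The one genuinely new ingredient is an independence-number analogue of Lemma~\ref{strength-domination}, namely the lower bound $\mathrm{str}\left(G\right)\geq 2n-2\beta\left(G\right)+1$; once this is available, both implications follow exactly as before. So I would first isolate and prove this lower bound, and then reproduce the forward relabeling argument together with the backward appeal to Theorem~\ref{main1}.

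For the lower bound, fix a strength labeling $f$ and assume, as in Lemma~\ref{strength-domination}, that $f\left(v_{i}\right)=i$. Consider the $\beta\left(G\right)+1$ vertices carrying the top labels, that is, those whose labels lie in $\left[\,n-\beta\left(G\right),\,n\,\right]$. These vertices cannot be pairwise non-adjacent, since their number exceeds $\beta\left(G\right)$; hence at least two of them, say $v_{s}$ and $v_{t}$, are adjacent, and the corresponding edge has label at least $\left(n-\beta\left(G\right)\right)+\left(n-\beta\left(G\right)+1\right)=2n-2\beta\left(G\right)+1$. Therefore $\mathrm{str}\left(G\right)=\mathrm{str}_{f}\left(G\right)\geq 2n-2\beta\left(G\right)+1$. (Alternatively, this bound follows directly from Lemma~\ref{strength-domination} together with the standard inequality $\gamma\left(G\right)\leq\beta\left(G\right)$, which holds because a maximum independent set is maximal and therefore dominating.)

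With the lower bound in hand, the forward direction is identical to that of Theorem~\ref{main_result}: assuming $\mathrm{str}\left(G\right)=2n-2k+1$ and a strength labeling with $f\left(v_{i}\right)=i$, every pair $v_{i},v_{j}$ with $i+j>2n-2k+1$ is non-adjacent in $G$, hence adjacent in $\overline{G}$. Setting $v_{i}=w_{n+1-i}$ and noting that $w_{a}w_{b}\in E\left(\overline{G}\right)$ forces $a+b<2k+1$ exhibits precisely the edge set of $F_{2k-1}$ inside $\overline{G}$, since every edge $w_{i}w_{j}$ of $F_{2k-1}$ satisfies $i+j\leq 2k$. For the backward direction, if $\overline{G}$ contains $F_{2k-1}$, then Theorem~\ref{main1} yields $\mathrm{str}\left(G\right)\leq 2n-\left(2k-1\right)=2n-2\beta\left(G\right)+1$, and combining this with the lower bound above gives the desired equality.

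I expect no serious obstacle, precisely because the scaffolding is parallel to Theorem~\ref{main_result}; the only step demanding genuine thought is the lower bound, where the \emph{independence} number (rather than the domination number) must be made to drive the pigeonhole argument. A secondary item worth checking is the boundary behaviour of the hypothesis $k\in\left[2,\lceil n/2\rceil\right]$: one should confirm that $2k-1$ lies in the range $\left[2,n-1\right]$ for which Theorem~\ref{main1} is stated, and handle the borderline case $2k-1=n$ (which can arise when $n$ is odd and $k=\lceil n/2\rceil$) separately if it falls outside that range.
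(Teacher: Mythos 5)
The paper states this theorem only as a quoted result from \cite{IMT} and gives no proof of it; your argument is correct and is essentially identical to the paper's proof of the parallel Theorem \ref{main_result}, with the pigeonhole lower bound now driven by $\beta\left(G\right)$ instead of $\gamma\left(G\right)$ (both of your justifications for that bound are valid). Your closing remark about the boundary case $2k-1=n$, which arises when $n$ is odd and $k=\lceil n/2\rceil$, is a legitimate point since Theorem \ref{main1} is stated only for $k\in\left[2,n-1\right]$ --- note that the paper's own proof of Theorem \ref{main_result} silently passes over the same issue.
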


The next result follows from Theorems \ref {main_result} and \ref {strength-independence-number}, which shows connection between the domination number and independence number.

\begin{corollary}
\label{connection}Let $G$ be a graph of order $n$ with $\gamma \left( G\right)=k$, where $k\in \left[ 2,\lceil n/2 \rceil \right]$,
and assume that $\overline{G}$ contains $F_{2k-1}$ as a subgraph.
Then $\gamma\left( G\right)=\beta \left( G\right)$.
\end{corollary}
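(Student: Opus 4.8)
The plan is to read the two characterizations, Theorem \ref{main_result} and Theorem \ref{strength-independence-number}, as two ways of computing the \emph{same} quantity $\mathrm{str}(G)$ from two different invariants, and then to force the two expressions to agree. First I would feed the hypotheses into Theorem \ref{main_result}: since $\gamma(G)=k\in[2,\lceil n/2\rceil]$ and $\overline{G}$ contains $F_{2k-1}$, that theorem yields at once
\begin{equation*}
\mathrm{str}(G)=2n-2\gamma(G)+1=2n-2k+1 .
\end{equation*}
This pins down the value of the strength; the remaining task is to recognise this same number as the independence-number expression $2n-2\beta(G)+1$, for then Theorem \ref{strength-independence-number} applies and equating the two expressions gives $\gamma(G)=\beta(G)$.

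Next I would record the inequality that comes for free. Every maximal independent set of $G$ is dominating, so $\gamma(G)\le\beta(G)$; writing $\beta(G)=m$ this gives $m\ge k\ge 2$, and the problem reduces to the reverse inequality $m\le k$. To obtain it I would try to upgrade the hypothesis $\overline{G}\supseteq F_{2k-1}$ to the stronger containment $\overline{G}\supseteq F_{2m-1}$ (after checking $m\le\lceil n/2\rceil$, so that the theorem is applicable). Once that is in hand, Theorem \ref{strength-independence-number} applies with the value $\beta(G)=m$ and yields $\mathrm{str}(G)=2n-2m+1$, which compared with the value $2n-2k+1$ found in the first step forces $m=k$, that is, $\beta(G)=\gamma(G)$.

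The hard part will be exactly this promotion step, because the family $\{F_j\}$ is nested in the unfavourable direction: $F_{2k-1}\subseteq F_{2m-1}$ when $m\ge k$, so the smaller containment does not formally deliver the larger one. Here I would exploit the translation $\beta(G)=\omega(\overline{G})$, so that $\overline{G}$ carries a clique $K_m$, and analyse how this clique sits relative to the copy of $F_{2k-1}$ already present; the relevant structural fact is that $F_{2k-1}$ has clique number exactly $k$, its $k$ vertices of smallest index forming a clique while the remaining $k-1$ indices are mutually nonadjacent. Bounding $\beta(G)$ from above thus amounts to showing that the strength value $2n-2k+1$ already precludes any independent set of $G$ of size exceeding $k$, equivalently that the $m+1$ largest labels in a strength labeling cannot all be pairwise nonadjacent. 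I expect this monotonicity-and-clique bookkeeping, the passage from the free inequality $\gamma(G)\le\beta(G)$ to the matching upper bound $\beta(G)\le k$, to be the single genuine obstacle in the argument.
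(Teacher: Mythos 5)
You have correctly located the crux: after extracting $\mathrm{str}(G)=2n-2k+1$ from Theorem \ref{main_result} and the free inequality $\gamma(G)\le\beta(G)$, everything hinges on the ``promotion'' step, i.e.\ on proving $\beta(G)\le k$. But that step is not merely hard --- it is impossible, because the corollary is false as stated. Take $G=C_{6}$: then $n=6$, $\gamma(G)=2=k\in[2,\lceil n/2\rceil]=[2,3]$, and $F_{3}=P_{3}$ is certainly a subgraph of $\overline{C_{6}}$ (which is $3$-regular), so all hypotheses hold; yet $\beta(C_{6})=3\neq 2=\gamma(C_{6})$. No amount of clique bookkeeping inside the copy of $F_{2k-1}$ can rescue this: the containment $\overline{G}\supseteq F_{2k-1}$ only produces a clique $K_{k}$ in $\overline{G}$, hence the lower bound $\beta(G)\ge k$, which is the same direction as the inequality you already had for free.

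The trouble originates upstream of your proposal, in the step you import from the paper. The ``if'' direction of Theorem \ref{main_result} needs the inequality $\mathrm{str}(G)\ge 2n-2\gamma(G)+1$ of Lemma \ref{strength-domination}, and the pigeonhole step in that lemma's proof is invalid: there is no reason why two of the $\gamma(G)+1$ vertices carrying the largest labels must be adjacent (that conclusion is correct with $\beta(G)$ in place of $\gamma(G)$, which is exactly the bound $\mathrm{str}(G)\ge 2n-2\beta(G)+1$ of \cite{IMT}; since $\gamma(G)\le\beta(G)$, the domination version is a strictly stronger claim and it fails). Concretely, $\mathrm{str}(C_{6})=8<9=2n-2\gamma(C_{6})+1$, so already your first displayed equation is wrong for this $G$. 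The only conclusions that survive from the hypotheses are $\mathrm{str}(G)\le 2n-2k+1$ (Theorem \ref{main1}) and hence $\beta(G)\ge k=\gamma(G)$. A correct statement would require an extra hypothesis forcing $\beta(G)\le\gamma(G)$, at which point there is nothing left to prove; so the right response here is a counterexample, not a completion of the promotion step.
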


The next lower bound for the domination in terms of its minimum degree is obtained immediately from Lemma \ref{strength-domination}.

\begin{corollary}
\label{corollary-domination}
Let $G$ be a graph of order $n$ with $\mathrm{str}\left( G\right)=n+\delta\left(G\right)$, where $\delta\left(G\right) \geq 1$. Then
\begin{equation*}
\gamma \left( G\right)\geq \lceil\left(n-\delta\left(G\right)+1\right) /2\rceil \text{.}
\end{equation*}
\end{corollary}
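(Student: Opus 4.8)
The plan is to derive the bound by a direct substitution of the hypothesis into Lemma \ref{strength-domination}, so essentially no new construction is needed. I start from the general inequality $\mathrm{str}\left(G\right) \geq 2n-2\gamma\left(G\right)+1$ supplied by Lemma \ref{strength-domination}, which holds for every graph of order $n$, and I feed in the extra assumption $\mathrm{str}\left(G\right)=n+\delta\left(G\right)$ that defines the class of graphs under consideration. Chaining these two facts gives
\begin{equation*}
n+\delta\left(G\right) = \mathrm{str}\left(G\right) \geq 2n-2\gamma\left(G\right)+1\text{.}
\end{equation*}

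Next I would rearrange this single inequality to isolate $\gamma\left(G\right)$. Adding $2\gamma\left(G\right)$ to both sides and then subtracting $n+\delta\left(G\right)$ yields $2\gamma\left(G\right) \geq n-\delta\left(G\right)+1$, and dividing by $2$ produces the real-valued bound
\begin{equation*}
\gamma\left(G\right) \geq \frac{n-\delta\left(G\right)+1}{2}\text{.}
\end{equation*}

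Finally I would invoke the fact that the domination number $\gamma\left(G\right)$ is a nonnegative integer. A rational lower bound on an integer quantity may always be replaced by its ceiling, so the inequality above upgrades to $\gamma\left(G\right) \geq \lceil\left(n-\delta\left(G\right)+1\right)/2\rceil$, which is exactly the claimed statement.

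There is no genuine obstacle here; the argument is a one-line consequence of Lemma \ref{strength-domination} together with the defining hypothesis on $\mathrm{str}\left(G\right)$. The only point requiring a moment's care is the passage to the ceiling, which is justified purely by the integrality of $\gamma\left(G\right)$ and tightens the bound for free. This integrality step is precisely what makes the ceiling appear in the statement, so I would be sure to mention it explicitly rather than leave the bound in its raw fractional form.
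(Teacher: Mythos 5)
Your proof is correct and matches the paper's intent exactly: the paper states that this corollary is "obtained immediately from Lemma \ref{strength-domination}," and your substitution of the hypothesis $\mathrm{str}\left(G\right)=n+\delta\left(G\right)$ into that lemma, followed by rearrangement and the integrality-of-$\gamma\left(G\right)$ step to justify the ceiling, is precisely that immediate derivation.
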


There are infinitely many graphs attaining the bound given in Corollary \ref{upper_bound_domination_number}. For instance, if $G=K_{n}$ ($n\geq 2$), 
then $\mathrm{str}\left(G\right)=2n-1$. Also, we have 
\begin{equation*}
\left|V\left(G\right)\right|=n \text{ and } \delta\left(G\right)=n-1 \text{.} 
\end{equation*}
This implies that $\left|V\left(G\right)\right|-\delta\left(G\right)=1$. On the other hand, we have 
\begin{equation*}
\gamma\left(G\right)=1 \text{ and } \lceil\left(\left|V\left(G\right)\right|-\delta\left(G\right)+1\right)/2\rceil=1 \text{.}
\end{equation*}

It is known that the domination number of a graph without isolated vertices is bounded above by all of the covering and independence numbers (see \cite[p. 307]{CL} for instance). Note that we denote $\alpha\left( G\right)$, $\alpha_{1}\left( G\right)$ and $\beta_{1}\left( G\right)$ to be the covering, edge covering and edge independence numbers of $G$, respectively. 

\begin{theorem}
\label{upper_bound_domination_number}If $G$ is a graph without isolated vertices, then
\begin{equation*}
\gamma\left(G\right) \leq \min \left\{\alpha\left( G\right), \alpha_{1}\left( G\right),\beta\left( G\right), \beta_{1}\left( G\right)\right\} \text{.}
\end{equation*}
\end{theorem}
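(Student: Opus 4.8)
The plan is to verify the four inequalities $\gamma(G)\le\alpha(G)$, $\gamma(G)\le\beta(G)$, $\gamma(G)\le\beta_{1}(G)$ and $\gamma(G)\le\alpha_{1}(G)$ separately, in each case by exhibiting a dominating set whose cardinality equals the parameter in question or is bounded above by it. The only hypothesis I shall repeatedly invoke is that $G$ has no isolated vertices, so that every vertex has at least one neighbor.

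For the two vertex parameters the arguments are immediate. First I would take a minimum vertex cover $C$ of $G$; for any $v\notin C$ a neighbor $u$ of $v$ exists, and the edge $uv$ forces $u\in C$, so $C$ is itself a dominating set and $\gamma(G)\le\left|C\right|=\alpha(G)$. Next I would take a maximum independent set $I$; being maximum it is maximal, and a maximal independent set is always dominating, since any vertex outside $I$ with no neighbor in $I$ could otherwise be adjoined to $I$. This gives $\gamma(G)\le\beta(G)$.

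The two edge parameters I would handle together through a single auxiliary object. The key observation is that a minimum edge cover $L$ of $G$, which exists precisely because $G$ has no isolated vertices, is a star forest: by minimality every edge of $L$ has an endpoint of degree one in $L$, and this forces each component to be a star. Let $c$ be the number of these stars and let $D$ consist of one center chosen from each star. Then $D$ is a dominating set, because every vertex is either a center or a leaf adjacent to its center, whence $\gamma(G)\le\left|D\right|=c$. Now $c\le\alpha_{1}(G)$, since the $\alpha_{1}(G)$ edges of $L$ are distributed among the $c$ stars with at least one edge in each; and $c\le\beta_{1}(G)$, since selecting one edge from each of the vertex-disjoint stars produces a matching of size $c$. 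Combining these, $\gamma(G)\le c\le\min\{\alpha_{1}(G),\beta_{1}(G)\}$.

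Assembling the four bounds yields $\gamma(G)\le\min\{\alpha(G),\alpha_{1}(G),\beta(G),\beta_{1}(G)\}$, as claimed. I expect the only genuinely nontrivial point to be the structural fact that a minimum edge cover is a star forest, which underpins both edge bounds simultaneously; once that is established, each estimate collapses to an elementary counting argument. As this structure is classical, one may alternatively simply cite it (see \cite[p.~307]{CL}).
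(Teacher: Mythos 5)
Your proof is correct. Note, however, that the paper does not actually prove this statement: it is quoted as a known fact with a citation to Chartrand and Lesniak (p.~307), so there is no in-paper argument to compare yours against. Your four bounds are the standard ones and are all sound: a minimum vertex cover dominates because every vertex outside it has a neighbour (here is where the no-isolated-vertices hypothesis enters) and that neighbour must lie in the cover; a maximum independent set is maximal and hence dominating; and the star-forest structure of a minimum edge cover gives both edge bounds simultaneously by selecting the star centers. The one genuinely structural step --- that minimality forces every edge of the edge cover to have an endpoint of degree one in the cover, which in turn forces each component to be a star --- you justify correctly (deleting an edge whose two endpoints both have degree at least two in the cover would leave a smaller edge cover), and the remaining estimates $c\leq\alpha_{1}(G)$ and $c\leq\beta_{1}(G)$ are elementary counting over the vertex-disjoint stars. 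Your consolidation of the two edge parameters through this single auxiliary star forest is efficient and is exactly the classical route one finds in the textbook source the authors cite.
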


The preceding theorem together with Lemma \ref{strength-domination} gives the following lower bound for the strength of a graph without isolated vertices.

\begin{corollary}
\label{corollary-covering-independence-number}For every graph $G$ without isolated vertices,
\begin{equation*}
\mathrm{str}\left( G\right) \geq 2n-2\min \left\{\alpha\left( G\right), \alpha_{1}\left( G\right), \beta\left( G\right), \beta_{1}\left( G\right)\right\}+1 \text{.}
\end{equation*}
\end{corollary}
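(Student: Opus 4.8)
The plan is to obtain the bound by composing the two immediately preceding results, so essentially no new argument is required. First I would apply Lemma~\ref{strength-domination}, which asserts that every graph $G$ of order $n$ satisfies $\mathrm{str}(G) \geq 2n - 2\gamma(G) + 1$. This holds with no restriction on $G$, so in particular it applies to the graphs under consideration here. Next, since $G$ is assumed to have no isolated vertices, I would invoke Theorem~\ref{upper_bound_domination_number} to bound the domination number from above by $\gamma(G) \leq \min\{\alpha(G), \alpha_1(G), \beta(G), \beta_1(G)\}$.

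The one detail worth flagging is the direction in which the second inequality is fed into the first. The coefficient of $\gamma(G)$ in the bound of Lemma~\ref{strength-domination} is negative, so an \emph{upper} bound on $\gamma(G)$ yields a \emph{lower} bound on the expression $2n - 2\gamma(G) + 1$. Setting $m = \min\{\alpha(G), \alpha_1(G), \beta(G), \beta_1(G)\}$, the inequality $\gamma(G) \leq m$ gives $-2\gamma(G) \geq -2m$, whence
\[
\mathrm{str}(G) \geq 2n - 2\gamma(G) + 1 \geq 2n - 2m + 1 ,
\]
which is exactly the claimed inequality.

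I do not expect any real obstacle, as the statement is a genuine corollary: all of the substance lives in Lemma~\ref{strength-domination} and in Theorem~\ref{upper_bound_domination_number} (the latter itself being a standard fact drawn from \cite[p.~307]{CL}). The corollary simply records that chaining these two bounds eliminates $\gamma(G)$ in favour of the four covering and independence parameters.
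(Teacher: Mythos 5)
Your proposal is correct and matches the paper exactly: the authors also obtain this corollary by combining Lemma~\ref{strength-domination} with Theorem~\ref{upper_bound_domination_number}, using the upper bound on $\gamma(G)$ to lower-bound $2n-2\gamma(G)+1$. Your explicit remark about the sign of the coefficient of $\gamma(G)$ is a nice touch but adds nothing beyond the paper's one-line derivation.
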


It is known from \cite {IMO1}  that $\mathrm{str}\left( C_{2n+1}\right) =2n+3$ ($n\geq 1$). Also, note that 
\begin{equation*}
\alpha\left(C_{2n+1}\right)=\alpha_{1}\left(C_{2n+1}\right)=n+1 
\text{ and } 
\beta\left(C_{2n+1}\right)=\beta_{1}\left(C_{2n+1}\right)=n \left(n\geq 1\right) \text{.} 
\end{equation*}
By means of these, it indicates that the bound given in Corollary \ref{corollary-covering-independence-number} is sharp.

\footnotesize

\end{document}